\newtheorem{theorem}{Theorem}
\newtheorem{lemma}{Lemma}
\newtheorem*{OldTheorem}{Theorem A}
\newtheorem*{OldTheorem2}{Theorem B}
\newtheorem*{OldTheorem3}{Theorem C}
\newtheorem*{OldTheorem4}{Theorem D}
\def\bmo{{\rm BMO\,}}
\def\ZR{\ensuremath{\mathbb R}}
\def\ZT{\ensuremath{\mathbb T}}
\def\ZI{\ensuremath{\mathbb I}}
\def\ZB{\ensuremath{\mathcal B}}
\def\md#1#2\emd{
\ifx0#1\begin{equation*} #2 \end{equation*}\fi
\ifx1#1\begin{equation}#2\end{equation}\fi
\ifx2#1\begin{align*}#2\end{align*}\fi
\ifx3#1\begin{align}#2\end{align}\fi
\ifx4#1\begin{gather*}#2\end{gather*}\fi
\ifx5#1\begin{gather}#2\end{gather}\fi
\ifx6#1\begin{multline*}#2\end{multline*}\fi
\ifx7#1\begin{multline}#2\end{multline}\fi
\ifx8#1\begin{multline*}\begin{split}#2\end{split}\end{multline*}\fi
\ifx9#1\begin{multline}\begin{split}#2\end{split}\end{multline}\fi
}
\newcommand {\e }[1]{(\ref{#1})}
\newcommand {\trm }[1]{Theorem \ref{#1}}
\begin{document}

\author{U. Goginava, L. Gogoladze and G. Karagulyan}
\title[On the Exponential Almost Everywhere Summability]{BMO-estimation and Almost Everywhere Exponential Summability of
Quadratic Partial Sums of Double Fourier Series%
}
\address{U. Goginava, Department of Mathematics, Faculty of Exact and Natural
Sciences, Iv. Javakhishvili Tbilisi State University, Chavcha\-vadze str. 1, Tbilisi 0128,
Georgia}
\email{zazagoginava@gmail.com}

\address{L. Gogoladze, Department of Mathematics, Faculty of Exact and Natural
Sciences, Iv. Javakhishvili Tbilisi State University, Chavcha\-vadze str. 1, Tbilisi 0128,
Georgia}
\email{lgogoladze1@hotmail.com}

\address{G. Karagulyan, Institue of Mathematics of Armenian National Academy
of Science, Bughramian Ave. 24/5, 375019, Yerevan, Armenia}
\email{g.karagulyan@yahoo.com}

\maketitle

\begin{abstract}
It is proved a $\bmo$-estimation for quadratic partial sums of two-dimensional
Fourier series from which it is derived an almost everywhere exponential
summability of quadratic partial sums of double Fourier series.
\end{abstract}

\medskip

\footnotetext{%
2010 Mathematics Subject Classification: 40F05, 42B08
\par
Key words and phrases: Fourier series, Strong Summability, Quadratic sums.

\par
The research of U. Goginava was supported by Shota Rustaveli National
Science Foundation grant no.31/48 (Operators in some function spaces and their applications in
Fourier analysis)}

\section{Introduction}

Let $\mathbb{T}:=[-\pi ,\pi )=\ZR/2\pi$ and $\mathbb{R}:=\left( -\infty ,\infty
\right) $. We denote by $L_{1}\left( \mathbb{T}\right) $ the class of all
measurable functions $f$ on $\mathbb{R}$ that are $2\pi $-periodic and
satisfy
\begin{equation*}
\left\Vert f\right\Vert _{1}:=\int\limits_{\mathbb{T}}\left\vert
f\right\vert <\infty .
\end{equation*}%
The Fourier series of the function $f\in L_{1}\left( \mathbb{T}\right) $
with respect to the trigonometric system is the series%
\begin{equation}
\sum_{n=-\infty }^{\infty }\widehat{f}\left( n\right) e^{inx},
\label{fourier}
\end{equation}%
where
\begin{equation*}
\widehat{f}\left( n\right) :=\frac{1}{2\pi }\int\limits_{\mathbb{T}}f\left(
x\right) e^{-inx}dx
\end{equation*}%
are the Fourier coefficients of $f$.

Denote by $S_{n}(x,f)$ the partial sums of the Fourier series of $f$ and let
\begin{equation*}
\sigma _{n}(x,f)=\frac{1}{n+1}\sum_{k=0}^{n}S_{k}(x,f)
\end{equation*}%
be the $(C,1)$ means of (\ref{fourier}). Fej\'er \cite{Fe} proved that $\sigma
_{n}(f)$ converges to $f$ uniformly for any $2\pi $-periodic continuous
function. Lebesgue in \cite{Le} established almost everywhere convergence of
$(C,1)$ means if $f\in L_{1}(\mathbb{T})$. The strong summability problem,
i.e. the convergence of the strong means
\begin{equation}
\frac{1}{n+1}\sum\limits_{k=0}^{n}\left\vert S_{k}\left( x,f\right) -f\left(
x\right) \right\vert ^{p},\quad x\in \mathbb{T},\quad p>0,  \label{Hp}
\end{equation}%
was first considered by Hardy and Littlewood in \cite{H-L}. They showed that
for any $f\in L_{r}(\mathbb{T})~\left( 1<r<\infty \right) $ the strong means
tend to $0$ a.e., if $n\rightarrow \infty $. The trigonometric Fourier
series of $f\in L_{1}(\mathbb{T})$ is said to be $\left( H,p\right) $%
-summable at $x\in T$, if the values \e{Hp} converge to $0$ as $n\rightarrow
\infty $. The $\left( H,p\right) $-summability problem in $L_{1}(\mathbb{T})$
has been investigated by Marcinkiewicz \cite{Ma} for $p=2$, and later by
Zygmund \cite{Zy2} for the general case $1\leq p<\infty $. K.~I.~Oskolkov in
\cite{Os} proved the following

\begin{OldTheorem}
Let $f\in L_{1}(\mathbb{T})$ and let $\Phi $ be a continuous positive convex
function on $[0,+\infty )$ with $\Phi \left( 0\right) =0$ and
\begin{equation}  \label{a1}
\ln \Phi \left( t\right) =O\left( t/\ln \ln t\right) \text{ \ \ \ }\left(
t\rightarrow \infty \right) .
\end{equation}
Then for almost all $x$%
\begin{equation}
\lim\limits_{n\rightarrow \infty }\frac{1}{n+1}\sum\limits_{k=0}^{n}\Phi
\left( \left\vert S_{k}\left( x,f\right) -f\left( x\right) \right\vert
\right) =0.  \label{osk}
\end{equation}
\end{OldTheorem}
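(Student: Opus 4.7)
The plan is to derive \e{osk} from a BMO-type estimate for the sequence $\{S_k f(x)\}_k$ in the variable $k$, combined with the John--Nirenberg inequality.

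First I would reduce to $f$ with arbitrarily small $L_1(\ZT)$ norm. Given $\eta>0$, split $f=P+h$ with $P$ a trigonometric polynomial and $\|h\|_1<\varepsilon$, where $\varepsilon$ is to be chosen small depending on $\eta$ and $\Phi$. For $k>\deg P$ one has $S_k f-f=S_k h-h$, so the contribution of the first $\deg P$ terms to the average in \e{osk} tends to $0$. Since \e{osk} is an almost-everywhere statement, it then suffices to establish the weak-type bound
\md0
\text{meas}\Bigl\{x\in\ZT:\limsup_n \frac{1}{n+1}\sum_{k=0}^n \Phi(|S_k h(x)-h(x)|)>\eta\Bigr\}<\eta.
\emd

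The main technical step is the BMO-in-$k$ estimate: one shows there is a function $B(x)=B(x;h)$ on $\ZT$ of weak type $(1,1)$, i.e.\ $\text{meas}\{B>\lambda\}\leq C\|h\|_1/\lambda$, such that for every $x$,
\md0
\sup_N\, \inf_{a\in\ZR}\frac{1}{N+1}\sum_{k=0}^N |S_k h(x)-a|\leq B(x).
\emd
This rests on the Carleson--Hunt maximal theorem together with a Calder\'on--Zygmund decomposition of $h$: grouping indices into dyadic blocks $\{2^j,\ldots,2^{j+1}\}$, one uses Hunt's $L^2$ bound on each block applied to the part of $h$ whose Fourier support is adapted to scale $2^j$, and then sums in $j$ by a Littlewood--Paley square-function argument.

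Given the BMO-in-$k$ bound, the sequence form of John--Nirenberg, applied pointwise in $x$, yields
\md0
\frac{1}{N+1}\,\card\{k\leq N:|S_k h(x)-h(x)|>t\}\leq C\exp(-ct/B(x))
\emd
for a.e.\ $x$. Integrating against $d\Phi$ and using $\Phi(t)\leq\exp(Ct/\ln\ln t)$ from \e{a1}, the exponent $-ct/B(x)+Ct/\ln\ln t$ becomes negative once $t\geq\exp\exp(C'B(x))$; on the set $\{B\leq C_\eta\}$, which has measure at least $1-\eta$ when $\varepsilon$ is small enough, this gives a uniform bound, and a final choice of $\varepsilon$ produces the desired weak-type estimate.

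The hard part is the BMO-in-$k$ estimate, which is the deep content of the argument and requires Carleson--Hunt together with a careful dyadic organisation of the indices $k$. The logarithmic gap $\ln\ln t$ in \e{a1} is exactly what is needed to absorb the unbounded growth of $B(x)$ on exceptional sets, which explains why the hypothesis cannot be relaxed to ordinary exponential growth of $\Phi$.
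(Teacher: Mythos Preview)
The paper does not prove Theorem A; it is quoted from Oskolkov as background, and the paper then records Rodin's improvement (Theorems B and D): the $\bmo$ weak-$(1,1)$ inequality for $\{S_n(x,f)\}$ combined with John--Nirenberg gives the stronger conclusion under $\ln\Phi(t)=O(t)$. Your outline is essentially this Rodin scheme, but two points fail.

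First, the quantity you call the ``BMO-in-$k$ estimate,'' $\sup_N\inf_a\frac{1}{N+1}\sum_{k\le N}|S_kh(x)-a|$, is not a $\bmo$ norm: it controls only the oscillation over initial segments $\{0,\dots,N\}$, not over arbitrary subintervals, and John--Nirenberg needs the latter to produce the exponential distributional bound you invoke. What is actually required is Rodin's Theorem D, the weak-$(1,1)$ bound for $\bmo[S_n(x,f)]$ in the paper's sense (the $\bmo[0,1]$ norm of the step function $\sum_k S_k(x,f)\ZI_{\delta_k^n}$, which does involve all subintervals). Your proposed proof of it via Carleson--Hunt, a Calder\'on--Zygmund decomposition of $h$, and Littlewood--Paley summation over dyadic blocks of indices $k$ is not how this estimate is obtained; Theorem D comes from direct kernel identities, and the paper's Lemma 1 shows how closely it is tied to the formula \e{Dir} for $\tilde D_n$ and to Kolmogorov's inequality \e{Kolmogorov}, with no appeal to Carleson's theorem.

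Second, your closing claim that the $\ln\ln t$ gap ``is exactly what is needed'' and ``cannot be relaxed to ordinary exponential growth of $\Phi$'' is wrong: Theorem B is precisely that relaxation, and Theorem C shows that $\ln\Phi(t)=O(t)$ is the sharp threshold. Your own scheme, once repaired with the genuine $\bmo$ estimate, already gives Theorem B: on $\{B\le\delta\}$ the decay $\exp(-ct/\delta)$ beats $\exp(At)$ as soon as $\delta<c/A$, and $|\{B>\delta\}|\le C\|h\|_1/\delta$ is made small by taking $\|h\|_1$ small. The factor $\ln\ln t$ plays no role in the argument.
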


It was noted in \cite{Os} that V.~Totik announced the conjecture that (\ref%
{osk}) holds almost everywhere for any $f\in L_{1}(\mathbb{T})$, provided
\begin{equation}  \label{a2}
\ln \Phi \left( t\right) =O\left( t\right) \quad \left( t\rightarrow \infty
\right).
\end{equation}
In \cite{Ro} V.Rodin proved

\begin{OldTheorem2}
Let $f\in L_{1}(\mathbb{T})$. Then for any $A>0$
\begin{equation*}
\lim\limits_{n\rightarrow \infty }\frac{1}{n+1}\sum\limits_{k=0}^{n}\left(
\exp \left( A\left\vert S_{k}\left( x,f\right) -f\left( x\right) \right\vert
\right) -1\right) =0
\end{equation*}%
for a. e. $x\in \mathbb{T}$.
\end{OldTheorem2}

G.~Karagulyan \cite{Ka} proved that the following is true.

\begin{OldTheorem3}
Suppose that a continuous increasing function $\Phi :[0,\infty )\rightarrow
\lbrack 0,\infty ),\Phi \left( 0\right) =0$, satisfies the condition%
\begin{equation*}
\limsup_{t\rightarrow +\infty }\frac{\log \Phi \left( t\right) }{t}=\infty .
\end{equation*}%
Then there exists a function $f\in L_{1}(\mathbb{T})$ for which the relation%
\begin{equation*}
\limsup_{n\rightarrow \infty }\frac{1}{n+1}\sum\limits_{k=0}^{n}\Phi \left(
\left\vert S_{k}\left( x,f\right) \right\vert \right) =\infty
\end{equation*}%
holds everywhere on $\mathbb{T}$.
\end{OldTheorem3}

In fact, Rodin in \cite{Ro} has obtained a $\bmo$ estimate for the partial
sums of Fourier series and his theorem stated above is obtained from that
estimate by using John-Nirenberg theorem. Recall the definition of $\bmo[0,1]$
space. It is the Banach space of functions $f\in L_{1}[0,1]$ with the norm

\begin{equation*}
\Vert f\Vert _{\bmo}=\mathfrak{R}(f)+\left\vert \int_{0}^{1}f(t)dt\right\vert
\end{equation*}%
where
\begin{equation*}
\mathfrak{R}(f)=\sup_{I}(|f-f_{I}|)_{I}, f_{I}=\frac{1}{|I|}\int_{I}f(t)dt
\end{equation*}%
and the supremum is taken over all intervals $I\subset \lbrack 0,1]$ (\cite%
{Gar}, chap. 6). Let $\{\xi _{n}:\,n=0,1,2,\ldots \}$ be an arbitrary
sequence of numbers. Taking $\delta _{k}^{n}=[k/{(n+1)},(k+1)/{(n+1)}]$, we
define

\begin{equation*}
\bmo\left[ \xi _{n}\right] =\sup\limits_{0\leq n<\infty }\left\Vert
\sum\limits_{k=0}^{n}\xi _{k}\mathbb{I}_{\delta _{k}^{n}}\left( t\right)
\right\Vert _{\bmo}
\end{equation*}%
where $\mathbb{I}_{\delta _{k}^{n}}(t)$ is the characteristic function of $%
\delta _{k}^{n}$. Notice that the expressions
\begin{equation}
\bmo\left[ \widetilde{S}_{n}\left( x,f\right) \right] ,~\ \ \bmo\left[
S_{n}\left( x,f\right) \right] ,~\ f\in L_{1}\left( \mathbb{T}\right)
,x\in \mathbb{T}  \label{e}
\end{equation}%
define a sublinear operators, where $\widetilde{S}_{n}\left( x,f\right) $ is
the conjugate partial sum. The following theorem is proved by Rodin in \cite%
{Ro}.

\begin{OldTheorem4}
The operators (\ref{e}) are of weak type $(1,1)$, i.e. the inequalities
\begin{equation}\label{bmo1}
|\{x\in \mathbb{T}:\,\bmo\mathrm{\,}[S_{n}(x,f)]>\lambda \}|\leq \frac{c}{%
\lambda }\int_{\mathbb{T}}|f(t)|dt
\end{equation}
and
\begin{equation}\label{bmo2}
|\{x\in \mathbb{T}:\,\bmo\mathrm{\,}[\widetilde{S}_{n}(x,f)]>\lambda \}|\leq
\frac{c}{\lambda }\int_{\mathbb{T}}|f(t)|dt
\end{equation}%
hold for any $f\in L_{1}(\mathbb{T})$.
\end{OldTheorem4}
In this paper we study the question of exponential summability of
quadratic partial sums of double Fourier series. Let $f\in L_{1}(\mathbb{T}%
^{2})$, be a function
with Fourier series
\begin{equation}
\sum_{m,n=-\infty }^{\infty }\widehat{f}\left( m,n\right) e^{i(mx+ny)},
\label{DF}
\end{equation}%
where
\begin{equation*}
\widehat{f}\left( m,n\right) =\frac{1}{4\pi ^{2}}\iint\limits_{\mathbb{T}%
^{2}}f(x,y)e^{-i(mx+ny)}dxdy
\end{equation*}%
are the Fourier coefficients of the function $f$. The rectangular partial
sums of (\ref{DF}) are defined as follows:
\begin{equation*}
S_{MN}\left( x,y,f\right) =\sum_{m=-M}^{M}\sum_{n=-N}^{N}\widehat{f}\left(
m,n\right) e^{i(mx+ny)}.
\end{equation*}%
We denote by $L\log L\left( \mathbb{T}^{2}\right) $ the class of measurable
functions $f$, with
\begin{equation*}
\iint\limits_{\mathbb{T}^{2}}|f|\log ^{+}|f|<\infty ,
\end{equation*}%
where $\log ^{+}u:=\mathbb{I}_{(1,\infty )}\log u.$ For quadratic partial sums
of two-dimensional trigonometric Fourier series Marcinkiewicz \cite{Ma2} has
proved, that if $f\in L\log L\left( \mathbb{T}^{2}\right) $, then%
\begin{equation*}
\lim\limits_{n\rightarrow \infty }\frac{1}{n+1}\sum\limits_{k=0}^{n}\left(
S_{kk}\left( x,y,f\right) -f\left( x,y\right) \right) =0
\end{equation*}%
for a. e. $\left( x,y\right) \in \mathbb{T}^{2}$. L.~Zhizhiashvili \cite{Zh}
improved this result showing that class $L\log L\left( \mathbb{T}^{2}\right)
$ can be replaced by $L_{1}\left( \mathbb{T}^{2}\right) $.

From a result of S.~Konyagin \cite{Kon} it follows that for every $%
\varepsilon >0$ there exists a function $f\in L\log ^{1-\varepsilon }\left(
\mathbb{T}^{2}\right) $ such that
\begin{equation}
\lim\limits_{n\rightarrow \infty }\frac{1}{n+1}\sum\limits_{k=0}^{n}\left%
\vert S_{kk}\left( x,y,f\right) -f\left( x,y\right) \right\vert \neq 0\text{
\ \ for a. e. }\left( x,y\right) \in \mathbb{T}^{2}.  \label{str}
\end{equation}

The main result of the present paper is the following.

\begin{theorem}\label{BMO}
If $f\in L\log L\left( \mathbb{T}^{2}\right) $,
then
\begin{eqnarray}
&&\left\vert \{(x,y)\in \mathbb{T}^{2}:\bmo[S_{nn}(f,x,y)]>\lambda \}\right\vert \\
\newline
&\leq &\frac{c}{\lambda }\left( 1+\iint\limits_{\mathbb{T}^{2}}|f|\log
^{+}|f)|\right)  \notag
\end{eqnarray}
for any $\lambda >0$, where $c$ is an absolute positive constant.
\end{theorem}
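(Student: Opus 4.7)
The plan is to imitate Rodin's proof of Theorem~D and adapt the kernel analysis to the product structure of the quadratic partial sums. Writing $S_n^x, S_n^y$ for the one-dimensional partial-sum operators in the respective variables and $R_n^x = I - S_n^x$, $R_n^y = I - S_n^y$, the identity $S_n^x S_n^y = I - R_n^x - R_n^y + R_n^x R_n^y$ yields
\[
S_{nn}(f,x,y) = S_n^x f(x,y) + S_n^y f(x,y) - f(x,y) + R_n^x R_n^y f(x,y).
\]
Since $f(x,y)$ is constant in the discrete index $n$ (contributing a term $|f(x,y)|$ to the $\bmo[\,\cdot\,]$-functional), subadditivity gives
\[
\bmo[S_{nn}(f,x,y)] \leq \bmo[S_n^x f(x,y)] + \bmo[S_n^y f(x,y)] + |f(x,y)| + \bmo[R_n^x R_n^y f(x,y)],
\]
and the $|f(x,y)|$ summand is trivially weak $L_1(\mathbb{T}^2)$ with the required constant.

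For the first two $\bmo$-summands, Theorem~D in one variable combined with Fubini in the other suffices: for a.e.\ fixed $y\in\mathbb{T}$, the one-dimensional estimate (\ref{bmo1}) applied to the section $t\mapsto f(t,y)$ gives
\[
|\{x\in\mathbb{T} : \bmo[S_n^x f(x,y)] > \lambda\}| \leq c\lambda^{-1} \int_\mathbb{T} |f(t,y)|\,dt,
\]
and integrating in $y$ produces a weak-$(1,1)$ bound of $c\lambda^{-1}\|f\|_{L_1(\mathbb{T}^2)}$, which is in particular dominated by $c\lambda^{-1}(1 + \iint|f|\log^+|f|)$; the second summand is handled symmetrically.

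The cross term $\bmo[R_n^x R_n^y f(x,y)]$ is the heart of the argument. Here I would carry Rodin's one-dimensional kernel decomposition through in both variables at once. Writing the Dirichlet kernel as $D_n(t) = e^{int}K_+(t) + e^{-int}K_-(t)$ with $K_\pm$ essentially truncated Cauchy kernels, the operator $R_n^x$ splits into modulated Hilbert transforms of the form $e^{\pm inx} H_x(e^{\mp in\cdot}f)(x,y)$ plus smoother remainders controlled by one-dimensional Hardy--Littlewood maximal functions, and $R_n^y$ splits analogously. The cross term then becomes a sum of modulated product Hilbert transforms $e^{\pm inx \pm iny} H_x H_y(\ldots)(x,y)$ plus iterated-maximal errors. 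The modulations in $n$ can be extracted from the sequence $\bmo[\,\cdot\,]$-functional up to bounded factors and do not affect $L\log L$-norms, so the dominant contribution is pointwise controlled by $|H_x H_y g(x,y)|$ for a suitable $g$ with $\|g\|_{L\log L} \leq c\|f\|_{L\log L}$. The desired bound then follows from the classical weak-type endpoint
\[
|\{(x,y) : |H_x H_y g(x,y)| > \lambda\}| \leq c\lambda^{-1}\Bigl(1 + \iint_{\mathbb{T}^2} |g|\log^+|g|\Bigr)
\]
for product singular integrals in two dimensions.

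The main obstacle is precisely this cross-term analysis: separating the $n$-dependent modulations from the singular-integral part inside the sequence $\bmo$-functional, and verifying that the kernel-decomposition errors are absorbed by iterated maximal operators of the matching weak type. The $L\log L$ hypothesis is forced by the failure of the product Hilbert transform to be of weak type $(1,1)$ in two dimensions (Fefferman), in full agreement with the Konyagin-type sharpness displayed in (\ref{str}) above.
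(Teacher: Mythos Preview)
Your reduction to the cross term $R_n^xR_n^yf$ is fine, and the single-variable pieces are handled exactly as you say. The gap is in your treatment of the cross term. After the kernel decomposition the main pieces are of the form
\[
e^{\pm in(x+y)}\,\widetilde H_x\widetilde H_y\bigl(e^{\mp in(t+s)}f(t,s)\bigr)(x,y)
\quad\text{or}\quad
e^{\pm in(x-y)}\,\widetilde H_x\widetilde H_y\bigl(e^{\mp in(t-s)}f(t,s)\bigr)(x,y),
\]
and here the \emph{inner} modulation cannot be ``extracted'': the function being fed to the product Hilbert transform genuinely depends on $n$, so the sequence you must take $\bmo[\,\cdot\,]$ of is not $e^{\pm in(x\pm y)}$ times a fixed quantity. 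There is no single $g$ with the dominant contribution pointwise controlled by $|H_xH_yg(x,y)|$; what you have written down is essentially a bi-parameter Carleson-type object, and the one-line reduction to the $L\log L$ endpoint for $H_xH_y$ does not go through.

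The paper closes exactly this gap by a diagonal change of variables. Since the inner modulation depends only on $t\pm s$, one passes to $u=t\pm s$, $v=s$; the $n$-dependence then collapses to a single factor $\cos nu$, but the product kernel $\bigl(\tan(t/2)\tan(s/2)\bigr)^{-1}$ becomes the non-product kernel $\bigl(\tan((u+v)/2)\tan(v/2)\bigr)^{-1}$. The key step is the trigonometric identity
\[
\frac{1}{\tan((u+v)/2)\tan(v/2)}=\frac{1}{\tan(u/2)\tan(v/2)}-\frac{1}{\tan(u/2)\tan((u+v)/2)}-1,
\]
which splits this back into pieces where the $v$-integral is a \emph{one-dimensional} conjugate function along a diagonal (call it $A(x,y)$), and the remaining $u$-integral is the one-dimensional operator $U_n$ of Lemma~1. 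One then applies Rodin's one-variable $\bmo$ weak-type bound (Lemma~1) in $u$ and Zygmund's $L\log L\to L_1$ bound for $A$. In short: the $L\log L$ hypothesis enters through a \emph{diagonal} Hilbert transform followed by a one-dimensional $\bmo$-sequence estimate, not through the bi-parameter $H_xH_y$ as you propose.
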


The following theorem shows that the quadratic sums of two-dimensional
Fourier series of a function $f\in L\log L\left( \mathbb{T}^{2}\right) $ are
almost everywhere exponentially summable to the function $f$. It will be
obtained from the previous theorem by using John-Nirenberg theorem.

\begin{theorem}
\label{exp} Suppose that $f\in L\log L\left( \mathbb{T}^{2}\right) $. Then
for any $A>0$%
\begin{equation*}
\lim\limits_{m\rightarrow \infty }\frac{1}{m+1}\sum\limits_{n=0}^{m}\left(
\exp \left( A\left\vert S_{nn}\left( x,y,f\right) -f\left( x,y\right)
\right\vert \right) -1\right) =0
\end{equation*}%
for a. e. $\left( x,y\right) \in \mathbb{T}^{2}$.
\end{theorem}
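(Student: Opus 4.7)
The plan is to deduce \trm{exp} from \trm{BMO} by the same density plus John--Nirenberg argument that Rodin used in \cite{Ro} to obtain Theorem B from Theorem D in the one-dimensional setting.

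Fix $A>0$ and small parameters $\varepsilon,\eta>0$. Since trigonometric polynomials are dense in $L\log L(\mathbb{T}^2)$, I may write $f=P+h$ with $P$ a trigonometric polynomial of some degree $N$ and $h$ satisfying $\iint|h|+\iint|h|\log^+|h|<\varepsilon$. Combining the pointwise bound $|S_{nn}(f)-f|\le|S_{nn}(P)-P|+|S_{nn}(h)-h|$ with the elementary inequality $e^{u+v}-1\le\tfrac12(e^{2u}-1)+\tfrac12(e^{2v}-1)$ (AM--GM on the exponential), the $A$-exponential strong mean in \trm{exp} is dominated pointwise by
\[
\tfrac12\,\tau_m^{2A}(P)(x,y)+\tfrac12\,\tau_m^{2A}(h)(x,y),\qquad \tau_m^B(g):=\frac1{m+1}\sum_{n=0}^m\bigl(e^{B|S_{nn}(g,x,y)-g(x,y)|}-1\bigr).
\]
The polynomial term is trivial: since $S_{nn}(P)\equiv P$ for $n\ge N$, only the first $N$ summands of $\tau_m^{2A}(P)$ survive and they are uniformly bounded in $(x,y)$, so $\tau_m^{2A}(P)(x,y)\to 0$ uniformly as $m\to\infty$.

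The remainder term is handled via John--Nirenberg. Writing $g_m(t):=\sum_{n=0}^m(S_{nn}(h,x,y)-h(x,y))\mathbb{I}_{\delta_n^m}(t)$ for the step function underlying the $\bmo[\,\cdot\,]$ functional, one has $\tau_m^{2A}(h)(x,y)=\int_0^1(e^{2A|g_m(t)|}-1)\,dt$. The John--Nirenberg inequality on $[0,1]$ produces a function $\Psi_A$ with $\Psi_A(0^+)=0$ such that, whenever $\|g_m\|_{\bmo[0,1]}\le\beta$ with $\beta$ small compared with $1/A$, one has $\int_0^1(e^{2A|g_m|}-1)\,dt\le\Psi_A(\beta)$. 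Given $\varepsilon'>0$ I choose $\beta=\beta(A,\varepsilon')$ so that $\Psi_A(\beta)<\varepsilon'$; then \trm{BMO} applied to $h$ yields
\[
\bigl|\{(x,y)\in\mathbb{T}^2:\sup_m\|g_m\|_{\bmo[0,1]}>\beta\}\bigr|\le\frac{C}{\beta}\Bigl(\|h\|_1+\iint|h|\log^+|h|\Bigr)<\eta,
\]
once $\varepsilon=\varepsilon(\beta,\eta)$ is small enough. Off this exceptional set, $\limsup_m\tau_m^A(f)(x,y)\le\varepsilon'/2$; sending first $\eta\to0$ and then $\varepsilon'\to0$ gives the a.e.\ conclusion of \trm{exp}.

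The main technical point I expect to grind through is obtaining a clean function $\Psi_A$ from John--Nirenberg in the discrete $\bmo[\xi_n]$ setting: one must track separately the mean-value contribution $|\int_0^1 g_m|=|\sigma_{mm}(h,x,y)-h(x,y)|$ and the oscillation $\mathfrak{R}(g_m)$, and reconcile the step function $g_m$ used here (whose values are the centered quantities $S_{nn}(h,x,y)-h(x,y)$) with the un-centered step function whose $\bmo$-norm is estimated in \trm{BMO}; this amounts to absorbing the constant $h(x,y)$ via an auxiliary Markov bound on $|h|$, which is harmless because $\|h\|_1<\varepsilon$.
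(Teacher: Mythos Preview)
Your proposal is correct and follows essentially the same route as the paper: deduce the exponential summability from \trm{BMO} via John--Nirenberg plus density of trigonometric polynomials in $L\log L(\mathbb{T}^2)$, handling the constant $h(x,y)$ by a separate Markov bound. The only cosmetic difference is that the paper packages John--Nirenberg as the inequality $\|g\|_{(\Psi)}\lesssim\|g\|_{\bmo}$ in the Orlicz space $L_\Psi(0,1)$ with $\Psi(t)=e^t-1$ and then argues $\|\sum_k(S_{kk}(f)-f)\mathbb{I}_{\delta_k^n}\|_{(\Psi)}\to0$ a.e.\ directly, thereby avoiding your AM--GM split, but the substance is identical.
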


According to a Lemma of L. D. Gogoladze \cite{Gog}, this theorem can be
formulated in more general settings.

\begin{theorem}
Let $\psi:[0,\infty)\to [0,\infty)$ be a increasing function,  satisfying the conditions
\md4
\lim_{u\rightarrow 0 }\psi \left( u\right) =\psi \left( 0\right)=0,\newline
\limsup_{u\rightarrow \infty }\frac{\log \psi \left( u\right) }{u}<\infty .
\emd
Then for any $f\in L\log L\left( \mathbb{T} ^{2}\right)$ we have
\begin{equation*}
\lim\limits_{m\rightarrow \infty }\frac{1}{m+1}\sum\limits_{n=0}^{m}\psi
\left( \left\vert S_{nn}\left( x,y,f\right) -f\left( x,y\right) \right\vert
\right) =0
\end{equation*}
almost everywhere on $\mathbb{T} ^{2}$.
\end{theorem}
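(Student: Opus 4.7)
The plan is to deduce the theorem from Theorem~\ref{exp} by an elementary domination of $\psi$ by an exponential; this reduction is presumably the content of the lemma of Gogoladze \cite{Gog}. I would first use the growth hypothesis $\limsup_{u\to\infty}\log\psi(u)/u<\infty$ to fix constants $A>0$ and $u_0>0$ with $\psi(u)\le e^{Au}$ for $u\ge u_0$, and exploit that, being increasing, $\psi$ is bounded on $[0,u_0]$ by $\psi(u_0)$. Combining these two regimes one obtains a global bound $\psi(u)\le Me^{Au}$ for all $u\ge 0$ with $M=\max(1,\psi(u_0))$.

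Next, given $\varepsilon>0$, the continuity of $\psi$ at $0$ (which is forced by $\lim_{u\to 0}\psi(u)=\psi(0)=0$ together with monotonicity) lets me choose $\delta\in(0,u_0]$ with $\psi(\delta)<\varepsilon$. For $u\le\delta$ monotonicity gives $\psi(u)\le\psi(\delta)<\varepsilon$. For $u>\delta$ the elementary identity $e^{Au}=(e^{Au}-1)+1$ together with $e^{Au}-1\ge e^{A\delta}-1>0$ gives $e^{Au}\le (1+(e^{A\delta}-1)^{-1})(e^{Au}-1)$, whence $\psi(u)\le Me^{Au}\le C_\delta(e^{Au}-1)$ for a constant $C_\delta$ depending only on $A,\delta,M$. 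The two cases combine into the pointwise inequality
\[
\psi(u)\le \varepsilon+C_\delta\bigl(e^{Au}-1\bigr),\qquad u\ge 0.
\]

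Applying this with $u=|S_{nn}(x,y,f)-f(x,y)|$ and taking the Ces\`aro mean over $n=0,\dots,m$,
\[
\frac{1}{m+1}\sum_{n=0}^{m}\psi\bigl(|S_{nn}(x,y,f)-f(x,y)|\bigr)\le \varepsilon+C_\delta\cdot\frac{1}{m+1}\sum_{n=0}^{m}\bigl(e^{A|S_{nn}(x,y,f)-f(x,y)|}-1\bigr).
\]
Theorem~\ref{exp}, applied with this value of $A$, says that the second sum vanishes as $m\to\infty$ for a.e.\ $(x,y)\in\mathbb{T}^2$. Hence the $\limsup_m$ of the left-hand side is at most $\varepsilon$ outside a null set $E_\varepsilon$. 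Choosing $\varepsilon=1/k$ and intersecting the full-measure complements of $E_{1/k}$ over $k\in\mathbb{N}$ yields the a.e.\ conclusion.

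I do not anticipate any serious obstacle: the analytic depth is carried entirely by Theorem~\ref{exp} (and ultimately by the $\bmo$-estimate of Theorem~\ref{BMO}). The only delicate point is to verify that the continuity/monotonicity hypotheses imposed on $\psi$ suffice for the elementary two-regime domination above, i.e., that $\psi$ is genuinely bounded on $[0,u_0]$ (which is automatic since $\psi$ is increasing) and that $\psi(\delta)$ can be made arbitrarily small (immediate from $\lim_{u\to 0}\psi(u)=0$). If Gogoladze's lemma in \cite{Gog} is phrased with slightly different regularity assumptions, the bookkeeping above will need only cosmetic adjustment.
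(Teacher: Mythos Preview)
Your proposal is correct and coincides with the paper's approach: the paper does not give an explicit proof of this theorem but simply states that it follows from Theorem~\ref{exp} via a lemma of Gogoladze~\cite{Gog}, and your argument is precisely a spelling-out of that reduction (domination of $\psi(u)$ by $\varepsilon+C_\delta(e^{Au}-1)$). One cosmetic remark: since $A$ is fixed once and for all, the exceptional null set coming from Theorem~\ref{exp} does not depend on $\varepsilon$, so the final countable intersection over $\varepsilon=1/k$ is unnecessary.
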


The results on Marcinkiewicz type strong summation for the Fourier series have
been investigated in \cite{FS,Ga,Gog2,Gl,GG,GGT,Gogi,lei,tot,Wa,We,Zh}

\section{Notations and lemmas}
The relation $a\lesssim b$ bellow stands for $a\leq c\cdot b$, where
$c$ is an absolute constant. The conjugate function of a given $f\in L_1(\ZT)$ is defined by
\md0
\tilde f(x)=\text{p.v.}\frac{1}{\pi }\int\limits_\ZT \frac{f(x+t)}{2\tan(t/2)}dt=\lim_{\varepsilon\to 0}\frac{1}{\pi }\int\limits_{\varepsilon<|t|<\pi} \frac{f(x+t)}{2\tan(t/2)}dt.
\emd
According to Kolmogorov's and Zygmund's inequalities (see \cite{Zy2}, chap. 7), we have%
\md3
|\{x\in \ZT:\, |\tilde f(x)|>\lambda\}|\lesssim \frac{\|f\|_{L_1(\ZT)}}{\lambda},\label{Kolmogorov}\\
\int\limits_{\ZT}|\tilde f(x)|dx\lesssim 1+\int\limits_{\mathbb{T}}|f(x)|\log ^{+}|f(x)|dx.
\label{Zygmund}
\emd
It will be used two simple properties of $\bmo$ norm below. First one says, if $\xi_n=c$, $n=1,2,\ldots $, then $\bmo\left[ \xi _{n}\right]=|c|$. The second one is, the bound
\md0
\bmo[\xi_n]\le 3\sup_n|\xi_n|.
\emd
We  shall consider the operators
\md0
U_n(x,f)=\text{p.v.}\frac{1}{\pi}\int\limits_\ZT\frac{\cos nt}{2\tan( t/2)}f(x+t)dt .
\emd
The following lemma is an immediate consequence of Theorem D.
\begin{lemma}
The inequality
\md0
|\{x\in \ZT:\, \bmo[U_n(x,f)]>\lambda\}| \lesssim \frac{\|f\|_{L_1(\ZT)}}{\lambda}
\emd
holds for any $f\in L_1(\ZT)$.
\end{lemma}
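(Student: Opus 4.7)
My plan is to derive the lemma as a quick consequence of Theorem D by expressing $U_n(x,f)$ algebraically as a combination of three quantities whose $\bmo$-sequence norms are easy to control: the conjugate function $\tilde f(x)$, the conjugate partial sum $\tilde S_n(x,f)$, and a uniformly bounded oscillatory remainder.

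First I would manipulate the conjugate Dirichlet kernel
\begin{equation*}
\tilde D_n(t)=\frac{\cos(t/2)-\cos((n+\tfrac12)t)}{\sin(t/2)}.
\end{equation*}
Applying the addition formula $\cos((n+\tfrac12)t)=\cos(nt)\cos(t/2)-\sin(nt)\sin(t/2)$ and using $\cos(t/2)/\sin(t/2)=1/\tan(t/2)$ gives the three-term decomposition
\begin{equation*}
\tilde D_n(t)=\frac{1}{\tan(t/2)}-\frac{\cos(nt)}{\tan(t/2)}+\sin(nt),
\end{equation*}
which, integrated against $f(x+t)/(2\pi)$, produces the pointwise identity
\begin{equation*}
U_n(x,f)=\tilde f(x)-\tilde S_n(x,f)+V_n(x),\qquad V_n(x):=\frac{1}{2\pi}\int_{\ZT}f(x+t)\sin(nt)\,dt.
\end{equation*}
The remainder is trivially controlled by $|V_n(x)|\le\|f\|_{L_1(\ZT)}/(2\pi)$ uniformly in $n$ and $x$.

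Next I would use the two elementary $\bmo$-sequence properties recalled just before the lemma. Since $\tilde f(x)$ does not depend on $n$, the constant-sequence rule gives $\bmo[\{\tilde f(x)\}_n]=|\tilde f(x)|$; since $V_n(x)$ is uniformly bounded in $n$, the second property yields $\bmo[\{V_n(x)\}_n]\lesssim \|f\|_{L_1(\ZT)}$. Subadditivity of the $\bmo$-sequence functional therefore gives
\begin{equation*}
\bmo[U_n(x,f)]\le |\tilde f(x)|+\bmo[\tilde S_n(x,f)]+c\,\|f\|_{L_1(\ZT)}.
\end{equation*}

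Finally I would split the level $\lambda$ into thirds and handle each term separately: Kolmogorov's inequality \eqref{Kolmogorov} controls the $|\tilde f|>\lambda/3$ contribution, the bound \eqref{bmo2} from Theorem D controls the $\bmo[\tilde S_n]>\lambda/3$ contribution, and the third piece is either empty (when $\lambda>3c\|f\|_{L_1(\ZT)}$) or trivially bounded by $2\pi\lesssim\|f\|_{L_1(\ZT)}/\lambda$. Summing these three contributions yields the claimed weak-type $(1,1)$ estimate. The only step that is not pure bookkeeping is the kernel decomposition, but that is essentially a one-line trigonometric identity, so I do not foresee any real obstacle.
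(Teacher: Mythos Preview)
Your proof is correct and follows essentially the same route as the paper: the same three-term decomposition of the conjugate Dirichlet kernel, the same pointwise identity relating $U_n$, $\tilde f$, $\tilde S_n$, and the bounded oscillatory term, and the same appeal to Kolmogorov's inequality together with the weak-type bound \eqref{bmo2} from Theorem~D. The only cosmetic difference is that the paper writes the kernel with $2\sin(t/2)$ in the denominator (hence factors of $1/2$ throughout) and bounds the oscillatory term directly rather than via the $\bmo[\xi_n]\le 3\sup_n|\xi_n|$ rule, but these are immaterial.
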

\begin{proof}
For the conjugate Dirichet kernel we have
\md9\label{Dir}
\tilde D_n(t)&=\frac{\cos(t/2)-\cos(n+1/2)t}{2\sin(t/2)}\\
&=\frac{1}{2\tan(t/2)}+\frac{\sin nt}{2}-\frac{\cos nt}{2\tan(t/2)}
\emd
and we get
\md8
\tilde S_n(x,f)&=\frac{1}{\pi}\int\limits_\ZT\tilde D_n(t)f(x+t)dt\\
&=\tilde f(x)+\frac{1}{2\pi}\int\limits_\ZT f(x+t)\sin ntdt-U_n(x,f).
\emd
Thus, applying simple properties of $\bmo$ norm, we obtain
\md0
\bmo[U_n(x,f)]\le |\tilde f(x)|+\frac{1}{2\pi}\int\limits_\ZT |f(t)|dt+\bmo\left[\tilde S_n(x,f)\right]
\emd
Applying the bound \e{Kolmogorov} and Theorem D, the last inequality completes the proof of lemma.
\end{proof}
We consider the square partial sums
\md1\label{a20}
S_{nn}\left( x,y,f\right) =\frac{1}{\pi ^{2}}\iint\limits_{\mathbb{T}^{2}}%
\frac{\sin \left( n+1/2\right) t\sin \left( n+1/2\right) s}{4\sin \left(
t/2\right) \sin \left( s/2\right) }f\left( x+t,y+s\right) dtds
\emd
and their modification, defined by
\md0
S_{nn}^*\left( x,y,f\right) =\frac{1}{\pi ^{2}}\iint\limits_{\mathbb{T}^{2}}%
\frac{\sin nt \sin ns}{4\tan \left(
t/2\right) \tan \left( s/2\right) }f\left( x+t,y+s\right) dtds.
\emd
\begin{lemma}
If $f\in L\log L(\ZT^2)$, then
\md0
\iint\limits_{\ZT^2}\sup_n\left|S_{nn}(x,y,f)-S_{nn}^*(x,y,f)\right|dxdy\lesssim  1+\iint\limits_{\mathbb{T}^{2}}|f|\log ^{+}|f|.
\emd
\end{lemma}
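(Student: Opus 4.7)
The plan is to expand the difference kernel via the identity $D_n(t)=\frac{\sin nt}{2\tan(t/2)}+\frac{\cos nt}{2}$ coming from \eqref{Dir}, which yields the decomposition
\begin{equation*}
D_n(t)D_n(s)-\frac{\sin nt\sin ns}{4\tan(t/2)\tan(s/2)}=\frac{\sin nt\cos ns}{4\tan(t/2)}+\frac{\cos nt\sin ns}{4\tan(s/2)}+\frac{\cos nt\cos ns}{4},
\end{equation*}
so that $S_{nn}-S_{nn}^{*}=P_nf+Q_nf+R_nf$ for the three corresponding convolution operators. The pure-product term is immediate: $|R_nf(x,y)|\le \|f\|_1/(4\pi^2)$ uniformly in $n$ and $(x,y)$, hence $\iint\sup_n|R_nf|\lesssim\|f\|_1$. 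The mixed operators $P_n$ and $Q_n$ are interchanged by the transposition $f(x,y)\mapsto f(y,x)$, which preserves the $L\log L$ norm, so it suffices to estimate $P_n$.

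For $P_n$ I will use the product-to-sum identity $\sin nt\cos ns=\tfrac12[\sin n(t+s)+\sin n(t-s)]$ to split $P_n=P_n^{+}+P_n^{-}$. In $P_n^{+}$ the substitution $u=t+s$ (holding $s$ fixed), followed by $\sigma=u-s$ inside the inner integral, collapses the double integral to
\begin{equation*}
P_n^{+}f(x,y)=\frac{1}{4\pi}\int_{\ZT}\sin(nu)\,\widetilde h_{x,\,y+u}(0)\,du,
\end{equation*}
where $h_{a,b}(\sigma):=f(a+\sigma,\,b-\sigma)$ is the anti-diagonal slice of $f$ and $\widetilde{(\cdot)}$ denotes the one-dimensional conjugate function in $\sigma$. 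Using $|\sin nu|\le 1$ I pull $\sup_n$ inside, apply Fubini, and translate $y\mapsto y-u$ (measure-preserving) in the outer integral to obtain
\begin{equation*}
\iint_{\ZT^{2}}\sup_n|P_n^{+}f(x,y)|\,dxdy\le\tfrac12\iint_{\ZT^{2}}|\widetilde h_{x,y}(0)|\,dxdy.
\end{equation*}

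The right-hand side will be controlled by Zygmund's inequality. The shift identity $\widetilde h_{x,y}(\sigma_0)=\widetilde h_{x+\sigma_0,\,y-\sigma_0}(0)$, together with the measure-preserving change $(x,y)\mapsto(x+\sigma_0,y-\sigma_0)$, shows that $\iint|\widetilde h_{x,y}(\sigma_0)|\,dxdy$ does not depend on $\sigma_0\in\ZT$, and averaging over $\sigma_0$ converts the pointwise value at $0$ into an $L_1$-norm in $\sigma$:
\begin{equation*}
\iint|\widetilde h_{x,y}(0)|\,dxdy=\frac{1}{2\pi}\iint\|\widetilde h_{x,y}\|_{L_{1}(\ZT)}\,dxdy.
\end{equation*}
Slice-wise Zygmund \eqref{Zygmund} gives $\|\widetilde h_{x,y}\|_{L_{1}}\lesssim 1+\int_{\ZT}|h_{x,y}(\sigma)|\log^{+}|h_{x,y}(\sigma)|\,d\sigma$, and the final measure-preserving change $(x',y')=(x+\sigma,y-\sigma)$ reduces the resulting triple integral to $2\pi\iint|f|\log^{+}|f|$, completing the bound for $P_n^{+}$. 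The term $P_n^{-}$ is handled identically using $u=t-s$ and the diagonal slices $\sigma\mapsto f(x+\sigma,y+\sigma)$, and $Q_nf$ follows by applying the $P_n$-estimate to the transpose $f(y,x)$.

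The main subtlety is to recognise that after the product-to-sum step the singular kernel in one variable and the modulation in the other decouple in the right way: the oscillating factor $\sin nu$ contributes nothing beyond $|\sin nu|\le 1$ once $\sup_n$ is taken, while the remaining singular integral, read along the correct (anti-)diagonal direction in $\ZT^{2}$, is exactly a one-dimensional Hilbert transform of a slice of $f$. Once this reduction is in place the $L\log L$ hypothesis is absorbed by a single application of Zygmund's inequality \eqref{Zygmund}, and no multiparameter Carleson-type machinery is needed.
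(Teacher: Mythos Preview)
Your proof is correct and follows essentially the same route as the paper: the same three-term decomposition from $D_n(t)=\frac{\sin nt}{2\tan(t/2)}+\frac{\cos nt}{2}$, the same product-to-sum splitting of the mixed term, the same change of variables turning the inner integral into a one-dimensional conjugate function of an (anti-)diagonal slice of $f$, and the same appeal to Zygmund's inequality \eqref{Zygmund}. The only cosmetic difference is that where the paper passes to rotated coordinates $x=t+s$, $y=t-s$ to integrate $|F_i|$, you instead use the shift identity $\widetilde h_{x,y}(\sigma_0)=\widetilde h_{x+\sigma_0,y-\sigma_0}(0)$ and average over $\sigma_0$ to convert the point value $\widetilde h_{x,y}(0)$ into the $L_1$-norm $\|\widetilde h_{x,y}\|_{L_1(\ZT)}$; the two devices are equivalent.
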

\begin{proof}
Substituting the expression for Dirichlet kernel
\md0
D_n(t)=\frac{\sin(n+1/2)t}{2\sin t/2}=\frac{\sin n t}{2\tan (t/2)} +\frac{\cos nt}{2}
\emd
in \e{a20}, we get
\md8
S_{nn}\left( x,y,f\right)&-S_{nn}^*(x,y,f)\\
&=\frac{1}{\pi ^{2}}\iint\limits_{\mathbb{T}^{2}}
\frac{\sin n t\cdot\cos n s}{4\tan (t/2)}  f\left( x+t,y+s\right) dtds   \\
&+\frac{1}{\pi ^{2}}\iint\limits_{\mathbb{T}^{2}}\frac{\cos n t\cdot\sin ns}{4\tan (s/2)} f\left( x+t,y+s\right) dtds  \\
&+\frac{1}{4\pi ^{2}}\iint\limits_{\mathbb{T}^{2}}\cos n t\cdot\cos n s\cdot f\left( x+t,y+s\right) dtds  \\
&={S}_{nn}^{\left( 1\right) }\left(
x,y,f\right) +{S}_{nn}^{\left( 2\right) }\left( x,y,f\right) +{S}_{nn}%
^{\left( 3\right) }\left( x,y,f\right) .
\emd
It is clear, that
\begin{equation}\label{a17}
|S_{nn}^{(3)}(x,y,f)|\lesssim  \|f\|_{L^1(\ZT^2)} \lesssim
1+\iint\limits_{\mathbb{T}^{2}}|f|\log ^{+}|f| .
\end{equation}
Everywhere below the notation
\md0
\text{ p.v.}\iint_{\ZT^2}f(t,s)dtds
\emd
stands for either
\md0
\text{ p.v.}\int_{\ZT}\left(\text{p.v.}\int_\ZT f(t,s)dt\right)ds, \hbox { or } \text{ p.v.}\int_{\ZT}\left(\text{p.v.}\int_\ZT f(t,s)ds\right)dt
\emd
and in each cases we have equality of these two iterated integrals. To observe that we will need just the fact that $ f\in L\log L (\ZT)$ implies $\tilde f\in L_1(\ZT)$. Hence, making simple transformations and then changing the variables, we get
\md9\label{a14}
{S}_{nn}^{\left( 1\right) }&\left( x,y,f\right)  \\
&=\text{p.v.}\frac{1}{2\pi ^{2}}
\iint\limits_{\mathbb{T}^2}\frac{\sin n(t+s)}{2\tan (t/2)}
f\left( x+t,y+s\right) ds dt
\\
&+\text{p.v.}\frac{1}{2\pi ^{2}}
\iint\limits_{\mathbb{T}^2}\frac{\sin n (t-s)}{2\tan (t/2)}
 f\left( x+t,y+s\right) ds dt
\\
&=\text{p.v.}\frac{1}{2\pi ^{2}}
\iint\limits_{\mathbb{T}^2}\frac{\sin n u \cdot f\left( x+v,y+u-v\right)}{2\tan (v/2)}  dvdu\quad(u=t+s,\, v=t)
\\
&+\text{p.v.}\frac{1}{2\pi ^{2}}
\iint\limits_{\mathbb{T}^2}\frac{\sin n u \cdot f\left( x+v,y+v-u\right)}{2\tan (v/2)}  dvdu\quad(u=t-s,\, v=t)
\\
&=\frac{1}{2\pi }
\int\limits_{\mathbb{T}}\sin n u \left(\text{p.v.} \frac{1}{\pi }\int\limits_{\mathbb{T}} \frac{f\left( x+v,y+u-v\right)}{2\tan (v/2)}
dv\right)du
\\
&+\frac{1}{2\pi }
\int\limits_{\mathbb{T}}\sin n u \left(\text{p.v.} \frac{1}{\pi }\int\limits_{\mathbb{T}} \frac{f\left( x+v,y+v-u\right)}{2\tan (v/2)}
dv\right)du.
\emd
Observe, that the functions
\md8
&F_1(x,y,u)=\text{p.v.} \frac{1}{\pi }\int\limits_{\mathbb{T}} \frac{f\left( x+v,y+u-v\right)}{2\tan (v/2)}dv\\
&F_2(x,y,u)=\text{p.v.} \frac{1}{\pi }\int\limits_{\mathbb{T}} \frac{f\left( x+v,y+v-u\right)}{2\tan (v/2)}dv
\emd
are defined for almost all triples $(x,y,u)$. Moreover, we shall prove that
\md3\label{a15}
\iiint\limits_{\ZT^3}|F_i(x,y,u)|dxdydu\lesssim 1+\iint\limits_{\mathbb{T}^{2}}|f|\log ^{+}|f|, \quad i=1,2.
\emd
Consider the function $h(t,s,u):=f(t+s,t+u-s)$. Substituting $x=t+s$ and  $y=t-s$ in the expression of $F_1$, we get
\md0
F_1(t+s,t-s,u)=\text{p.v.} \frac{1}{\pi }\int\limits_{\mathbb{T}} \frac{h\left( t,s+v,u\right)}{2\tan (v/2)}dv.
\emd
Thus, first using the inequality \e{Zygmund} for variable $s$, then integrating by $t$ and $u$, we obtain
\md0
\iiint\limits_{\ZT^3}|F_1(t+s,t-s,u)|dsdtdu\lesssim 1+\iiint\limits_{\mathbb{T}^{3}}|h(t,s,u)||\log ^{+}|h(t,s,u)|dtdsdu.
\emd
After the change of variables $t=(x+y)/2$ and $s=(x-y)/2$ in the integrals, we get \e{a15} in the case $i=1$. The case $i=2$ may be proved similarly.
On the other hand, from \e{a14} it follows that
\md0
|S_{nn}^{(1)}\left( x,y,f\right)|\le \frac{1}{2\pi}\int\limits_\ZT|F_1(x,y,u)|du+\frac{1}{2\pi}\int\limits_\ZT|F_2(x,y,u)|du.
\emd
Combining this inequality with \e{a15}, we obtain
\md1\label{a16}
\iint\limits_{\ZT^2}\sup_n|S_{nn}^{(1)}\left( x,y,f\right)|dxdy\lesssim 1+\iint\limits_{\mathbb{T}^{2}}|f|\log ^{+}|f|
\emd
Similarly we can get the same bound for  $S_{nn}^{(2)}\left( x,y,f\right)$, which together with \e{a17} completes the proof of lemma.
\end{proof}

\section{Proof of Theorems}
\begin{proof}[Proof of Theorem \ref{BMO}]
From Lemma 2 we obtain
\md0
|S_{nn}(x,y,f)-S_{nn}^*(x,y,f)|\le \phi(x,y),\quad n=1,2,\ldots,
\emd
where the function $\phi(x,y)\ge 0$ satisfies the bound
\md0
\iint\limits_{\ZT^2}\phi(x,y)dxdy\lesssim 1+\iint\limits_{\mathbb{T}^{2}}|f|\log ^{+}|f|.
\emd
Thus we get
\md0
\bmo[S_{nn}(x,y,f)]\le \bmo[S_{nn}^*(x,y,f)]+3\phi(x,y).
\emd
Hence, the theorem will be proved, if we obtain $\bmo$ weak $(1,1)$ estimate for modified partial sums.  We have
\md9\label{a1}
S_{nn}^*\left( x,y,f\right) &  \\
=&\frac{1}{2\pi ^{2}}\iint\limits_{\mathbb{T}
^{2}}\frac{\cos n(t-s)\cdot f\left(
x+t,y+s\right)}{4\tan \left(
t/2\right) \tan \left( s/2\right) } dtds\notag \\
-&\frac{1}{2\pi ^{2}}\iint\limits_{\mathbb{T}
^{2}}\frac{\cos n(t+s)\cdot f\left(
x+t,y+s\right)}{4\tan \left(
t/2\right) \tan \left( s/2\right) } dtds\notag \\
=&\frac{1}{2\pi ^{2}}\iint\limits_{\mathbb{T}
^{2}}\frac{\cos nu \cdot f\left(
x+u+v,y+v\right)}{4\tan \left(
(u+v)/2\right) \tan \left( v/2\right) } dudv\quad (u=t-s,v=s)\notag \\
-&\frac{1}{2\pi ^{2}}\iint\limits_{\mathbb{T}
^{2}}\frac{\cos nu \cdot f\left(
x+u+v,y-v\right)}{4\tan \left(
(u+v)/2\right) \tan \left( v/2\right) } dudv\quad (u=t+s,v=-s)\notag\\
=& I_n(x,y,f)-J_n(x,y,f).\notag
\emd
Using a simple and an important identity
\md7
\frac{1}{\tan ((u+v)/2)\tan \left( v/2\right) }=\\
\frac{1}{\tan(u/2)\tan(v/2)}-\frac{1}{\tan(u/2)\tan ((u+v)/2)}-1,
\emd
we obtain
\md8
I_n(x,y,f)\\
=&\text{p.v.}\frac{1}{2\pi ^{2}}\iint\limits_{\mathbb{T}^2}\frac{\cos nu\cdot f\left(x+u+v,y+v\right)}{4\tan(u/2)\tan(v/2) } dudv\\
-&\text{p.v.}\frac{1}{2\pi ^{2}}\iint\limits_{\mathbb{T}^{2}}\frac{\cos nu\cdot f\left(x+u+v,y+v\right) }{4\tan(u/2)\tan((u+v)/2) }dudv\\
-&\frac{1}{2\pi ^{2}}\iint\limits_{\mathbb{T}^{2}}f\left(x+t,y+s\right) dtds\\
= &\text{p.v.}\frac{1}{2\pi }\int\limits_{\mathbb{T}}\frac{\cos nu}{2\tan(u/2) }\left(\text{p.v.} \frac{1}{\pi } \int\limits_{\mathbb{T}}\frac{f\left(x+u+v,y+v\right) } {2\tan(v/2 ) }dv\right)du\\
- &\text{p.v.}\frac{1}{2\pi }\int\limits_{\mathbb{T}}\frac{\cos nu}{2\tan(u/2) }\left(\text{p.v.}\frac{1}{\pi } \int\limits_{\mathbb{T}}\frac{f\left(x+u+v,y+v\right) } {2\tan((u+v)/2 ) }dv\right)du\\
-&\frac{1}{2\pi ^{2}}\iint\limits_{\mathbb{T}^{2}}f\left(t,s\right) dtds=I_n^{(1)}(x,y,f)-I_n^{(2)}(x,y,f)-I^{(0)},
\emd
where
\md1\label{a7}
|I^{(0)}|=\frac{1}{2\pi ^{2}}\left|\iint\limits_{\mathbb{T}^{2}}f\left(t,s\right) dtds\right|\lesssim 1+\iint\limits_{\mathbb{T}^{2}}|f(x,y)|\log ^{+}|f(x,y)|dxdy.
\emd
Observe that
\md0
I_n^{(1)}(x,y,f)=\frac{1}{2}\cdot U_n(x,A(\cdot, y))
\emd
where
\md0
A(x,y)=\text{p.v.}\frac{1}{\pi } \int\limits_{\mathbb{T}}\frac{f\left(x+v,y+v\right) } {2\tan(v/2 ) }dv.
\emd
Denoting $g(t,s):=f(t+s,t-s)$ and substituting $x=t+s$ and  $y=t-s$ we get
\md0
A(t+s,t-s)=\text{p.v.}\frac{1}{\pi } \int\limits_{\mathbb{T}}\frac{g\left(t+v,s\right) } {2\tan(v/2 ) }dv.
\emd
Using the inequality \e{Zygmund} for variable $t$ and then integrating by $s$, we obtain
\md0
\iint\limits_{\ZT^2}|A(t+s,t-s)|dsdt\lesssim 1+\iint\limits_{\mathbb{T}^{2}}|g(t,s)||\log ^{+}|g(t,s)|dtds.
\emd
After the changing back of variables $t=(x+y)/2$ and $s=(x-y)/2$ we get
\begin{equation}
\iint\limits_{\mathbb{T}^{2}}\left\vert A\left( x,y\right) \right\vert
dxdy\lesssim 1+\iint\limits_{\mathbb{T}^{2}}|f(x,y)|\log ^{+}|f(x,y)|dxdy.
\label{Zy1}
\end{equation}
Hence, applying the Lemma 1, we conclude
\md7\label{a4}
|\{(x,y)\in \mathbb{T}^2:\,\bmo[I_n^{(1)}(x,y,f)]>\lambda \}|\\
\lesssim \frac{1}{%
\lambda }\left(1+\iint\limits_{\mathbb{T}^{2}}|f(x,y)|\log ^{+}|f(x,y)|dxdy\right).
\emd
After the changing of variable $u+v\to \nu$ in the inner integral of the expression of $I_n^{(2)}(x,y,f)$ we get
\md0
I_n^{(2)}(x,y,f)=\text{p.v.}\frac{1}{2\pi }\int\limits_{\mathbb{T}}\frac{\cos nu}{2\tan(u/2) }\left(\text{p.v.}\frac{1}{\pi } \int\limits_{\mathbb{T}}\frac{f\left(x+\nu,y+\nu-u\right) } {2\tan(\nu/2 ) }d\nu\right)du,
\emd
and then analogously we can prove that
\md7\label{a5}
|\{(x,y)\in \mathbb{T}^2:\,\bmo[I_n^{(2)}(x,y,f)]>\lambda \}|\\
\lesssim \frac{1}{%
\lambda }\left(1+\iint\limits_{\mathbb{T}^{2}}|f(x,y)|\log ^{+}|f(x,y)|dxdy\right).
\emd
Hence, using \e{a7}, \e{a4} and \e{a5}, we obtain
\md6
|\{(x,y)\in \mathbb{T}^2:\,\bmo[I_n(x,y,f)]>\lambda \}|\\
\lesssim \frac{1}{%
\lambda }\left(1+\iint\limits_{\mathbb{T}^{2}}|f(x,y)|\log ^{+}|f(x,y)|dxdy\right).
\emd
Using the absolutely same process we may get the analogous estimate for $J_n(x,y,f)$ and therefore  for $S_{nn}^*(x,y,f)$.
The theorem is proved.
\end{proof}
Let $X$ be either $[0,1]$ or $\ZT^2$ and $L_{M}=L_{M}(X)$ is the Orlicz space of functions on $X$, generated by Young function $M$,
 i. e. $M$ is convex continuous even function such that $M\left( 0\right)
=0 $ and%
\begin{equation*}
\lim_{t\rightarrow 0+}\frac{M(t)}{t}=\lim_{t\rightarrow \infty }\frac{t}{M(t)%
}=0.
\end{equation*}%
It is well known that $L_{M}$ is a Banach space with respect to Luxemburg norm%
\begin{equation*}
\left\Vert f\right\Vert _{(M)}:=\inf \left\{ \lambda :\,\lambda
>0,\,\int\limits_X M\left( \frac{|f|}{\lambda }\right) \leq
1\right\} <\infty .
\end{equation*}
 We will need some basic
properties of Orlicz spaces (see \cite{KrRu} ).

1) According to a theorem from (\cite{KrRu}, chap. 2, theorem 9.5) we have%
\begin{equation}  \label{d1}
\left\Vert f\right\Vert _{(M)}\leq 1\Rightarrow \int\limits_X
M\left( |f|\right) \leq \left\Vert f\right\Vert _{(M)},
\end{equation}%

2) From this fact we may deduce, that%
\begin{equation}  \label{d3}
0,5\left( 1+\int\limits_XM\left( |f|\right) \right) \leq
\left\Vert f\right\Vert _{(M)}\leq  1+\int\limits_X
M\left( |f|\right)
\end{equation}%
provided $\left\Vert f\right\Vert _{(M)}=1$.

3) From the definition of norm $\|\cdot\|_{(M)}$ immediately follows that $|f(x)|\le |g(x)|$ implies $\|f\|_{(M)}\le \|g\|_{(M)}$. Besides, for any measurable set $E$ we have%
\begin{equation*}
\left\Vert \mathbb{I}_{E}\right\Vert _{(M)}=o\left( 1\right) \text{ as }%
\left\vert E\right\vert \rightarrow 0\text{ \ (\cite{KrRu}, (9.23)).}
\end{equation*}

4) If $M$ satisfies $\Delta _{2}$-condition, that is%
\begin{equation*}
M\left( 2t\right) \le cM\left( t\right) ,t>t_{0},
\end{equation*}%
and $X=\ZT^2$, then the set of two variable trigonometric polynomials on $\ZT^2$ is dense in $L_{M}$ (\cite{KrRu}, \S 10).

5) From (\ref{d1}) it follows that for any sequence of functions $f_{n}$ the
condition $\left\Vert f_{n}\right\Vert _{(M)}\rightarrow 0$ implies $%
\int\limits_XM\left( |f_{n}|\right) \rightarrow 0.$

\begin{proof}[Proof of \trm{exp}]
We will deal with two $M$-functions
\md4
\Phi(t)=t\log^+t,\\
\Psi(t)=\exp t-1.
\emd
We consider two Orlicz spaces $L_\Phi= L_\Phi(\ZT^2)$ and  $L_\Psi= L_\Psi(0,1)$. Combining \e{d3} with \trm{BMO}, we may obtain
\md1\label{d4}
|\{ ( x,y) \in \mathbb{T}^{2}:\,\bmo [S_{nn}(x,y,f)] >\lambda \}
\lesssim \frac{\|f\|_{(\Phi)}}{\lambda}.
\emd
Indeed, at first we deduce the case when $\|f\|_{(\Phi)}= 1$, then, using a linearity principle, we get the inequality in the general case.

The inequality
\md1\label{d5}
\|f\|_{(\Psi)} \lesssim \|f\|_{\bmo}
\emd
proved in \cite{Ro}. It is an immediate consequence of the John-Nirenberg theorem.
Denote
\md1\label{d6}
\ZB f(x,y)=\sup_{0\le n<\infty}\left\|\sum_{k=0}^nS_{kk}(x,y,f)\ZI_{\delta_k^n}(t)\right\|_{{(\Psi)}}.
\emd
Notice, that by the definition we have
\md0
\bmo [S_{nn}(f,x,y)]=\sup_{0\le n<\infty}\left\|\sum_{k=0}^nS_{kk}(x,y,f)\ZI_{\delta_k^n}(t)\right\|_{\bmo}.
\emd
So, taking into account  \e{d4} and \e{d5} we obtain
\md1\label{d7}
|\{ ( x,y) \in \mathbb{T}^{2}:\,\ZB f(x,y) >\lambda \}
\lesssim \frac{\|f\|_{(\Phi)}}{\lambda}.
\emd
On the other hand we have
\md6
\frac{1}{n+1}\sum_{k=0}^{n}(
\exp A|S_{kk}( x,y,f) -f( x,y)| -1) \\
=\frac{1}{n+1}\sum_{k=0}^n\Psi(A|S_{kk}( x,y,f) -f( x,y)|)\\
=\int_0^1\Psi\left(A\sum_{k=0}^n|S_{kk}(x,y,f)-f(x,y)|\ZI_{\delta_k^n}(t)\right)dt.
\emd
Thus, according the property 5) of Orlicz spaces, to prove the theorem it is enough to prove that
\md1\label{d8}
\left\|\sum_{k=0}^n(S_{kk}(x,y,f)-f(x,y))\ZI_{\delta_k^n}(t)\right\|_{{(\Psi)}}\to 0,
\emd
almost everywhere on $\ZT^2$ as $n\to\infty$, for any $f\in L_\Phi$. It is easy to observe, that \e{d8} holds if $f$ is a real trigonometric polynomial in two variables. Indeed, if $P(x,y)$ is a polynomial of degree $m$, then we have
\md0
S_{kk}(x,y,P)-P(x,y)\equiv 0,\quad  k\ge m.
\emd
Therefore, if $n\ge m$, then we get
\md0
\left|\sum_{k=0}^n(S_{kk}(x,y,P)-P(x,y))\ZI_{\delta_k^n}(t)\right|\le C\cdot\ZI_{[0,m/(n+1)]}(t),
\emd
where $C$ is a constant, depending on $P$. Then, applying the property 3) of Orlicz spaces, we conclude that \e{d8} holds if $f=P$. To prove the general case, we consider the set
\md7\label{d9}
G_\lambda=\{(x,y)\in\ZT^2:\\ \limsup_{n\to\infty}\left\|\sum_{k=0}^n(S_{kk}(x,y,f)-f(x,y))\ZI_{\delta_k^n}(t)\right\|_{{(\Psi)}}>\lambda\}.
\emd
To complete the proof of theorem, it enough to prove that $|G_\lambda|=0$ if $\lambda>0$. It is easy to check that $\Phi(t)$ satisfies the $\Delta_2$-condition. Therefore, according the property 4), we may chose a polynomial $P(x,y)$ such that
$\|f-P\|_{(\Phi)}<\varepsilon$. Using the definition of $(\Phi)$-norm, we get
\md0
\int\limits_{\ZT^2}\Phi\left(\left|\frac{f-P}{\varepsilon}\right|\right)<1.
\emd
From Chebishev's inequality, one can easily deduce
\md0
|\{(x,y)\in\ZT^2:\,|f(x,y)-P(x,y)|>\lambda\}|\le \frac{1}{\Phi(\lambda/\varepsilon)},\quad \lambda>0.
\emd
Thus, using \e{d7} for any $\lambda>0$ we get
\md8
|G&_\lambda|=|\{(x,y)\in\ZT^2:\\
&\limsup_{n\to\infty}\left\|\sum_{k=0}^n(S_{kk}(x,y,f-P)-f(x,y)+P(x,y))\ZI_{\delta_k^n}(dt)\right\|_{{(\Psi)}}>\lambda\}|\\
&\le |\{ \ZB (f-P)(x,y)+c|f(x,y)-P(x,y)|>\lambda\}|\\
&\lesssim \frac{\|f-P\|_{(\Phi)}}{\lambda}+\frac{1}{\Phi(\lambda/\varepsilon)}\le \frac{\varepsilon}{\lambda}+\frac{1}{\Phi(\lambda/\varepsilon)}.
\emd
Since $\varepsilon>0$ may be taken sufficiently small, we conclude $|G_\lambda|=0$ if $\lambda>0$.
\end{proof}

{\bf Acknowledgement}. The authors would like to thank the referees for helpful suggestions.


\begin{thebibliography}{99}
\bibitem{Fe} Fej\'{e}r L., Untersuchungen uber Fouriersche Reihen, Math.
Annalen, 58 (1904), 501--569.

\bibitem{FS}Fridli S., Schipp F., Strong summability and Sidon type inequalities. Acta Sci. Math. (Szeged) 60 (1995),
 no. 1-2, 277--289.

\bibitem{Ga} Gabisonia O. D., On strong summability points for Fourier
series, Mat. Zametki. 5, 14 (1973), 615--626.
\bibitem{Gar} Garnett J. B., Bounded analitic functions, 1981. translated in Russian.



\bibitem{GGT}G\'at G., Goginava U., Tkebuchava G., Convergence in measure of logarithmic means of quadratical
partial sums of double Walsh-Fourier series. J. Math. Anal. Appl. 323 (2006), no. 1, 535--549.

\bibitem{Gl}Glukhov V. A., Summation of multiple Fourier series in multiplicative systems.
(Russian) Mat. Zametki 39 (1986), no. 5, 665--673.

\bibitem{GG}Goginava U., Gogoladze L., Strong approximation by Marcinkiewicz means of two-dimensional
Walsh-Fourier series. Constr. Approx. 35 (2012), no. 1, 1--19.

\bibitem{Gogi}Goginava U., The weak type inequality for the maximal operator of the Marcinkiewicz-Fej\'er means
of the two-dimensional Walsh-Fourier series. J. Approx. Theory 154 (2008), no. 2, 161--180.


\bibitem{Gog} Gogoladze L. D., On strong summability almost everywhere.
(Russian) Mat. Sb. (N.S.) 135(177) (1988), no. 2, 158--168, 271; translation
in Math. USSR-Sb. 63 (1989), no. 1, 153--16


\bibitem{Gog2} Gogoladze L. D., Strong means of Marcinkiewicz type. (Russian) Soobshch. Akad. Nauk Gruzin.
SSR 102 (1981), no. 2, 293--295.


\bibitem{H-L} Hardy G. H., Littlewood J. E., Sur la series de Fourier
d'une fonction a carre sommable, Comptes Rendus (Paris) 156 (1913),
1307--1309.

\bibitem{Ka} Karagulyan G. A., Everywhere divergent $\Phi$-means of Fourier
series. (Russian) Mat. Zametki 80 (2006), no. 1, 50--59; translation in
Math. Notes 80 (2006), no. 1-2, 47--56

\bibitem{KrRu} Krasnoselski M. A., Rutitski Ya. B., Convex functions and Orlicz spaces, Moscow, 1958.(Russian)

\bibitem{Kon} Konyagin S. V., On the divergence of subsequences of partial
sums of multiple trigonometric Fourier series, Trudy MIAN 190 (1989),
102--116.

\bibitem{Le} Lebesgue H., Recherches sur la sommabilite forte des series de
Fourier, Math. Annalen 61 (1905), 251--280.

\bibitem{lei} Leindler L., Strong approximation by Fourier series, Akademiai Kiado, Budapest, 1985.

\bibitem{Ma} Marcinkiewicz J., Sur la sommabilit\'{e} forte de s\'{e}ries de
Fourier. (French) J. London Math. Soc. 14, (1939).162--168.

\bibitem{Ma2} Marcinkiewicz J., Sur une methode remarquable de sommation des
series doublefes de Fourier. \textit{Ann. Scuola Norm. Sup. Pisa}, 8(1939),
149--160.

\bibitem{Os} Oskolkov K. I., Strong summability of Fourier series. (Russian)
Studies in the theory of functions of several real variables and the
approximation of functions. Trudy Mat. Inst. Steklov. 172 (1985), 280--290.

\bibitem{Ro} Rodin, V. A., The space BMO and strong means of Fourier series.
Anal. Math. 16 (1990), no. 4, 291--302.

\bibitem{Sch} Schipp F., On the strong summability of Walsh series.
Dedicated to Professors Zolt\'{a}n Dar\'{o}czy and Imre K\'{a}tai. Publ.
Math. Debrecen 52 (1998), no. 3-4, 611--633.

\bibitem{S}Sj\"{o}lin P., Convergence almost everywhere of certain singular integrals and multiple Fourier
series. Ark. Mat. 9, 65--90. (1971).

\bibitem{tot} Totik V., On the strong approximation of Fourier series. Acta Math. Acad. Sci. Hungar.
35 (1980), no. 1-2, 151--172.

\bibitem{Zh} Zhizhiashvili L. V., Generalization of a theorem of Marcinkiwicz,
 \textit{Izvest.AN\ USSR, ser. matem. } 32(1968), 1112--1122 (Russian).

\bibitem{Zy} Zygmund A., On the convergence and summability of power series
on the circle of convergence II., Proc. London Math Soc. 47 (1941), 326--350.

\bibitem{Zy2} Zygmund A., Trigonometric series. Cambridge University Press,
Cambridge, 1959.

\bibitem{Wa} Wang, Kun Yang. Some estimates for the strong approximation of continuous periodic
functions of the two variables by their sums of Marcinkiewicz type. (Chinese) Beijing Shifan Daxue Xuebao 1981, no. 1, 7--22.

\bibitem{We} Weisz F., Strong Marcinkiewicz summability of multi-dimensional Fourier series. Ann. Univ. Sci.
Budapest. Sect. Comput. 29 (2008), 297--317.
\end{thebibliography}
\end{document}